\newcounter{paragrafsubsub}[subsubsection]
\renewcommand{\theparagrafsubsub}{%
\thesubsubsection.\roman{paragrafsubsub}}
\newcommand{\paragrafsubsub}{%
\refstepcounter{paragrafsubsub}
{\bf \theparagrafsubsub}\hspace{0.2em}--- }
\newcounter{paragrafsub}[subsection]
\renewcommand{\theparagrafsub}{\thesubsection.\arabic{paragrafsub}}
\newcommand{\paragrafsub}{%
\refstepcounter{paragrafsub}
{\bf \theparagrafsub}\hspace{0.2em}--- }
\newcounter{paragraf}[section]
\renewcommand{\theparagraf}{\thesection.\arabic{paragraf}}
\newcommand{\paragraf}{%
\refstepcounter{paragraf}
{\bf \theparagraf}\hspace{0.2em}--- }
\newcommand\paragraphe{%
\par \indent
\ifcase\value{subsection} %
\paragraf
\else
\ifcase\value{subsubsection}\paragrafsub %
\else\paragrafsubsub
\fi\fi
}
\def\longto{\longrightarrow}
\def\RR{{\mathbb R}}\def\QQ{{\mathbb Q}}\def\ZZ{{\mathbb Z}}\def\NN{{\mathbb N}}
\def\CC{{\mathbb C}}
\def\Fl{{\mathcal Fl}}\def\cone{{\mathcal C}}\def\Face{{\mathcal F}}
\def\Gr{{\rm Gr}}\def\Gl{{\rm Gl}}
\def\Pic{\rm Pic}
\def\kbprod{\odot_0}
\def\Li{{\mathcal{L}}}
\newtheorem{prop}{Proposition}
\newtheorem{theo}{Theorem}
\newtheorem{coro}{Corollary}
\newenvironment{proof}{{\noindent\bf Proof.}}{\hfill $\square$}
\newenvironment{remark}{{\noindent\bf Remark.}}{}
\begin{document}
\title{A short geometric proof of a conjecture of Fulton}
\author{N. Ressayre}

\maketitle

\begin{abstract}
We give a new geometric proof of a conjecture of Fulton on the Littlewood-Richardson coefficients.
This conjecture was firstly proved by Knutson, Tao and Woodward using the Honeycomb theory.
A geometric proof was given by Belkale. Our proof is based on the geometry of Horn's cones.
\end{abstract}

\section{Introduction}
 
Recall that irreducible representations of $\Gl_r(\CC)$ are indexed by sequences 
$\lambda=(\lambda_1\geq\cdots\geq\lambda_r)\in\ZZ^r$. 
If $\lambda_r\geq 0$, $\lambda$ is called a {\it partition}.
Denote   the representation corresponding to $\lambda$ by $V_\lambda$.
Define Littlewood-Richardson coefficients $c_{\lambda\,\mu}^\nu\in\NN$ by:
$V_\lambda\otimes V_\mu=\sum_\nu c_{\lambda\,\mu}^\nu V_\nu$.
W.~Fulton conjectured that for any positive integer $N$,
$$
c_{\lambda\,\mu}^\nu=1 \ \Rightarrow\  c_{N\lambda\,N\mu}^{N\nu}=1.
$$
This conjecture was firstly proved by Knutson, Tao and Woodward \cite{KTW} using the Honeycomb theory.
A geometric proof was given by Belkale in \cite{belkale:geomHorn}.
The aim of this note is to give a short proof of this conjecture based on the geometry of Horn cones.

Note that the converse of Fulton's conjecture is a consequence of  Zelevinski's saturation conjecture.
This last conjecture was proved in \cite{KT:saturation,Bel:saturation,DW:saturation}.

The key observations of our proof are:
\begin{enumerate}
\item the non-trivial faces of codimension one of Horn cones corresponds to Littlewodd-Richardson 
coefficients equal to one;
\item each non zero Littlewood-Richardson coefficient give a linear inequality satisfied 
by Horn cones.
\end{enumerate}
Assume that  $c_{\lambda\,\mu}^\nu=1$. 
By Borel-Weyl's theorem, $c_{\lambda\,\mu}^\nu$ is the dimension of the $\Gl_r$ invariant sections
of a line bundle $\Li$ on a certain projective variety $X$.
Then, $c_{N\lambda\,N\mu}^{N\nu}$ is the dimension of the $\Gl_r$ invariant sections
of $\Li^{\otimes N}$. This implies that $c_{N\lambda\,N\mu}^{N\nu}\geq 1$.
In particular, $c_{N\lambda\,N\mu}^{N\nu}$ gives a linear inequality for a certain Horn cone: we 
prove that this inequality correspond to a face of codimension one. 

In this proof, the Littlewood-Richardson coefficients are mainly the coefficient structure of
the cohomology of the Grassmannians in the Schubert basis. Our technique can be applied to 
prove similar results for the coefficient structure of the Belkale-Kumar's product $\kbprod$ 
on the cohomology of others projective homogeneous spaces $G/P$. 

\section{Geometry of Horn cones}

\subsection{Horn's cone of Eigenvalues}

\paragraphe{\bf Schubert Calculus.}
Let $\Gr(a,b)$ be the Grassmann variety of $a$-dimensional subspaces $L$ of a 
fixed $a+b$-dimensional vector space $V$.
We fix a complete flag $F_\bullet$: $\{0\}=F_0\subset F_1\subset 
F_2\subset\cdots\subset F_{a+b}=V$.
For any subset $I=\{i_1<\cdots<i_a\}$ of cardinal $a$ in $\{1,\cdots,a+b\}$, there is a 
Schubert variety $\Omega_I(F_\bullet)$ in $\Gr(a,b)$ defined by
$$
\Omega_I(F_\bullet)=\{L\in\Gr(a,b)\,:\,
\dim(L\cap F_{i_j})\geq j {\rm\ for\ }1\leq j\leq n\}.
$$
The Poincar\'e dual of the homology class of $\Omega_I(F_\bullet)$ does not depend on $F_\bullet$;
it is denoted $\sigma_I$.
The $\sigma_I$ form a $\ZZ$-basis for the cohomology ring. 
It follows that for any subsets $I,\,J$ of cardinal $a$ in $\{1,\cdots,a+b\}$, there is a unique 
expression
$$
\sigma_I.\sigma_J=\sum_Kc_{IJ}^K\sigma_K,
$$
for integers $c_{IJ}^K$. We define $K^\vee$ by $i\in K^\vee$ if and only if $a+b+1-i\in K$. 
Then, if the sum of the codimensions of $\Omega_I(F_\bullet)$, $\Omega_J(F_\bullet)$ and 
$\Omega_I(F_\bullet)$ equals the dimension of $\Gr(a,b)$,  we have
$$
\sigma_I.\sigma_J.\sigma_K=c_{IJ}^{K^\vee}[{\rm pt}].
$$
\paragraphe{\bf Horn's cone.}
Let $H(n)$ denote the set of $n$ by $n$ Hermitian matrix.
For $A\in H(n)$, we denote its spectrum by $\alpha(A)=(\alpha_1,\cdots,\alpha_n)\in \RR^n$ 
repeated according to multiplicity and ordered such that $\alpha_1\geq\cdots\geq\alpha_n$. We set
$$
\Delta(n):=\{(\alpha(A),\alpha(B),\alpha(C))\in\RR^{3n}\,:\,
A,\,B,\,C\in H(n)  {\rm\ s.t.\ }A+B+C=0\}.
$$
Set $E(n)=\RR^{3n}$, let $E(n)^+$ denote the set of $(\alpha_i,\beta_i,\gamma_i)\in E(n)$
such that 
$\alpha_i\geq\alpha_{i+1},\, \beta_i\geq\beta_{i+1}$ and
$\gamma_i\geq\gamma_{i+1}$ for all $i=1,\cdots,n-1$. 
Let $E(n)^{++}$ denote the interior of $E(n)^+$.
Let $E_0(n)$ denote the hyperplane of points $(\alpha_i,\beta_i,\gamma_i)\in E(n)$ such that
$\sum\alpha_i+\sum\beta_i+\sum\gamma_i=0$. 
The set $\Delta(n)$ is a closed convex cone contained in $E_0(n)$ and of non empty interior in this
hyperplane.\\

\paragraphe\label{par:GITcone}{\bf GIT-cone}
Let $V$ be a complex $n$-dimensional vector space. 
Let $\Fl(V)$ denote the variety of complete flags of $V$. 
The group $G=\Gl(V)$ acts diagonaly on the variety $X=\Fl(V)^3$.
Let us fix a basis in $V$, $F_\bullet\in\Fl(V)$ the standard flag for this base, 
$B$ its stabilizer in $G$ and $T$ the torus consisting of diagonal matrices.
We identify the character groups $X(T)$ and $X(B)$ with $\ZZ^n$ in canonical way.
The line $\CC$ endowed by the action of $B^3$ given by $(\lambda,\,\mu,\,\nu)\in(\ZZ^n)^3=X(B^3)$
is denoted by $\CC_{(\lambda,\,\mu,\,\nu)}$.
The fiber product $G^3\times_{B^3}\CC_{(\lambda,\,\mu,\,\nu)}$ is a $G^3$-linearized line bundle
$\Li_{\lambda,\mu,\nu}$
on $X$; we denote by $\overline{\Li}_{\lambda,\,\mu,\,\nu}$ the $G$-linearized line bundle obtained 
by restricting the $G^3$-action to the diagonal.

We denote by $\cone^G(X)$ the rational cone generated by triples of partitions 
$(\lambda,\,\mu,\,\nu)$ such that $\Li_{\lambda,\,\mu,\,\nu}$ has non zero 
$G$-invariant sections. The fist proof of the following is due to Heckman \cite{Heck}, 
(see also \cite{Fulton:survey}.

\begin{theo}
\label{th:GITeigen}
  The cone $\Delta(n)$ is the closure of the rational convex cone $\cone^G(X)$.
\end{theo}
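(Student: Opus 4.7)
The plan is to identify both cones via the moment map for the diagonal action of a maximal compact $K=U(n)\subset G=\Gl(V)$ on $X=\Fl(V)^3$, and then apply the Kempf--Ness theorem. First I would fix $K$ and identify $\lu(n)^*\cong H(n)$ via the trace pairing. Under this identification, the coadjoint orbit through a dominant weight $\lambda=(\lambda_1\geq\cdots\geq\lambda_n)$ is exactly the set of Hermitian matrices with ordered spectrum $\lambda$.

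For strictly dominant $(\lambda,\mu,\nu)$ the line bundle $\overline{\Li}_{\lambda,\mu,\nu}$ is ample on $X$, and its curvature gives a $K$-invariant K\"ahler form $\omega$. The classical moment map computation for $\Fl(V)\subset\PP(V_\lambda^*)$ (Kirwan, Atiyah--Bott) identifies the moment map on each factor with the projection to the coadjoint orbit through the corresponding weight. Summing over factors, the moment map $\Phi\colon X\longto H(n)$ for the diagonal $K$-action sends $(F^1_\bullet,F^2_\bullet,F^3_\bullet)$ to $A+B+C$, for some Hermitian $A,B,C$ with $\alpha(A)=\lambda$, $\alpha(B)=\mu$, $\alpha(C)=\nu$; conversely every such triple arises this way. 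By Kempf--Ness, $X^{\rm ss}(\overline{\Li}_{\lambda,\mu,\nu})\neq\emptyset$ if and only if $0\in\Phi(X)$, equivalently $(\lambda,\mu,\nu)\in\Delta(n)$. On the other hand $X^{\rm ss}(\overline{\Li}_{\lambda,\mu,\nu})\neq\emptyset$ is also equivalent to the existence of a non-zero $G$-invariant section of some positive tensor power $\overline{\Li}_{\lambda,\mu,\nu}^{\otimes N}$, which places $(\lambda,\mu,\nu)$ in $\cone^G(X)$.

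Combining the two equivalences, the rational points of $\cone^G(X)$ and of $\Delta(n)$ lying in the strictly dominant chamber coincide. Since $\cone^G(X)$ is a rational cone and both cones have non-empty interior in $E_0(n)$, taking topological closure in $E_0(n)$ yields $\Delta(n)=\overline{\cone^G(X)}$. The hard part will be the moment map identification of the middle step: matching the trace identification $\lu(n)^*\cong H(n)$ with the $B$-action on $\CC_{(\lambda,\mu,\nu)}$ requires careful sign conventions and use of the Borel--Weil description of $\Fl(V)$ as a regular coadjoint orbit. A minor secondary issue is that $\overline{\Li}_{\lambda,\mu,\nu}$ is ample only for strictly dominant triples; the remaining boundary cases follow by density of the rational interior points of $\Delta(n)$ in $E_0(n)$, combined with the cone structure which allows passage to strictly dominant multiples.
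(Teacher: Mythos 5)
The paper does not actually prove Theorem~\ref{th:GITeigen}: it states it as known, attributes the first proof to Heckman (with Fulton's survey as an expository reference), and then uses it as a black box in the sequel. So there is no in-paper argument to compare against. Your moment-map/Kempf--Ness proof is essentially the standard one found in those references, and its skeleton is sound: identify $\lu(n)^*$ with $H(n)$ by the trace form; recognize the moment map for the diagonal $U(n)$-action on $\Fl(V)^3$, polarized by $\overline{\Li}_{\lambda,\mu,\nu}$, as $(F^1_\bullet,F^2_\bullet,F^3_\bullet)\mapsto A+B+C$ with $A$ running over the orbit of Hermitian matrices of spectrum $\lambda$, and similarly for $B$, $C$; then apply Kempf--Ness to translate $0\in\Phi(X)$ into $X^{\rm ss}(\overline{\Li}_{\lambda,\mu,\nu})\neq\emptyset$, i.e.\ into non-vanishing of $H^0(X,\overline{\Li}_{\lambda,\mu,\nu}^{\otimes N})^G$ for some $N>0$. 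This chain correctly identifies the rational strictly dominant points of $\Delta(n)$ with those of $\cone^G(X)$.

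The one step that does not hold up as written is the final passage to closures. ``Passage to strictly dominant multiples'' is vacuous: scaling a dominant but not strictly dominant triple by a positive integer never makes it strictly dominant, so this cannot reduce boundary points of $\cone^G(X)$ to the ample case. What is actually needed is a short convexity argument. For $\Delta(n)\subset\overline{\cone^G(X)}$: since $\Delta(n)\subset E(n)^+$ and none of the walls $\alpha_i=\alpha_{i+1}$ (resp.\ for $\beta$, $\gamma$) contains $E_0(n)$, the relative interior of $\Delta(n)$ in $E_0(n)$ lies in $E(n)^{++}$; its rational points are dense in $\Delta(n)$, and by your equivalence they lie in $\cone^G(X)$. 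For $\cone^G(X)\subset\Delta(n)$ (which gives $\overline{\cone^G(X)}\subset\Delta(n)$ since $\Delta(n)$ is closed): choose a rational $q\in\cone^G(X)\cap E(n)^{++}$, which exists by the previous step; for any rational $p\in\cone^G(X)$, the open segment $(q,p)$ lies in $\cone^G(X)\cap E(n)^{++}$ because $\cone^G(X)$ is convex and contained in the convex cone $E(n)^+$ of which $q$ is an interior point; the rational points of $(q,p)$ lie in $\Delta(n)$ by your equivalence, and $p\in\Delta(n)$ follows by closedness. With this repair the argument is complete; the sign conventions you flag in the moment-map identification are a genuine but routine matter.
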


\subsection{Faces of $\Delta(n)$}

\paragraphe
We have a complete description of the linear forms on $E(n)$ which define faces of codimension one 
of $\Delta(n)$.
The first proof using Honeycombs is due to Knutson,Tao and Woodward (see \cite{KTW}).
A geometric proof is due to Belkale (\cite{Bel:irred}). 
In \cite{GITEigen}, I made a different geometric proof. 
A proof using quivers is also given in \cite{DW:comb}.   

\begin{theo}
\label{th:face}
  The hyperplanes $\alpha_i=\alpha_{i+1}$, $\beta_i=\beta_{i+1}$ and $\gamma_i=\gamma_{i+1}$
spanned by the codimension one faces of $E(n)^+$ intersects $\Delta(n)$ along faces of codimension 
one.

For any subsets $I,J$ and $K$ of $\{1,\cdots,n\}$ of the same cardinality such that 
$c_{IJ}^{K^\vee}=1$, the hyperplane 
$\sum_{i\in I}\alpha_i+\sum_{j\in J}\beta_j+\sum_{k\in K}\gamma_k=0$. 
intersects $\Delta(n)$ along a face $\Face_{IJK}$ of codimension one. 
Any face of codimension one intersecting $E(n)^{++}$ is obtain is this way.
\end{theo}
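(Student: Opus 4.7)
The plan is to deduce Theorem \ref{th:face} from Theorem \ref{th:GITeigen}, so that everything takes place in the GIT cone $\cone^G(X)$ on $X = \Fl(V)^3$; the facets are then to be read off from one-parameter subgroups of $T$ via a Hilbert--Mumford analysis.

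I would first dispose of the walls $\alpha_i=\alpha_{i+1}$ (and their $\beta,\gamma$ analogues). The characters of $B^3$ lying on such a wall extend to $P\times B\times B$ for a suitable parabolic $P\supset B$, so the corresponding GIT cone is nothing but the cone of $X$ with one factor $\Fl(V)$ replaced by the partial flag variety $G/P$. As that smaller cone has full dimension inside the hyperplane, one obtains a codimension one face.

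For the nontrivial walls, given subsets $I,J,K$ of $\{1,\ldots,n\}$ of cardinality $a$, I would attach a one-parameter subgroup $\tau$ of $T$ with weight $1$ on an appropriate block of $a$ coordinates and weight $0$ elsewhere, twisted by permutations encoding $I,J,K$, and select the connected component $C_{IJK}$ of $X^\tau$ whose diagonal Schubert position matches $(I,J,K)$. A direct weight computation identifies the $\tau$-weight of $\Li_{\lambda,\mu,\nu}$ on $C_{IJK}$ with
$$
L_{IJK}(\lambda,\mu,\nu) \;=\; \sum_{i\in I}\alpha_i+\sum_{j\in J}\beta_j+\sum_{k\in K}\gamma_k.
$$
Thus, whenever $c_{IJ}^{K^\vee}\neq 0$, Hilbert--Mumford yields $L_{IJK}\leq 0$ on $\cone^G(X)$, which is Klyachko's inequality, and hence the hyperplane $L_{IJK}=0$ is a supporting hyperplane of $\Delta(n)$.

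The main obstacle is to upgrade this inequality to a face of codimension exactly one precisely when $c_{IJ}^{K^\vee}=1$. For this I would invoke the facet criterion from \cite{GITEigen}: the pair $(\tau,C_{IJK})$ cuts out a codimension one face of $\cone^G(X)$ if and only if the natural map $G\times_P C_{IJK}^+ \longto X$ is birational, where $P$ is the parabolic of $\tau$ and $C_{IJK}^+$ its Bialynicki--Birula attracting set. The degree of this map is computable as the Schubert intersection $\sigma_I\cdot\sigma_J\cdot\sigma_K$ in $\H^*(\Gr(a,n-a))$, so birationality is precisely the condition $c_{IJ}^{K^\vee}=1$, producing the face $\Face_{IJK}$. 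The converse follows from the same classification: any face of codimension one of $\Delta(n)$ meeting $E(n)^{++}$ must arise from such a well-covering pair $(\tau,C)$ on $X$, and on $\Fl(V)^3$ these are exhausted by the Schubert triples $(I,J,K)$ with $c_{IJ}^{K^\vee}=1$.
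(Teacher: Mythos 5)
The paper itself contains no proof of Theorem~\ref{th:face}: it is stated as a known result and attributed to four references --- \cite{KTW} via honeycombs, \cite{Bel:irred} for a geometric proof, \cite{GITEigen} for the author's own GIT-theoretic proof, and \cite{DW:comb} via quivers. Your proposal is essentially a compressed sketch of the approach in \cite{GITEigen}, which is indeed the one most closely aligned with the rest of the paper's machinery: the pair $(\lambda,C)$, the morphism $\eta_{IJK}:G\times_{P(\lambda)}C^+\to X$, and the passage between $\Face_{IJK}$ and $\Face_{IJK}^\QQ$ via semistable points on $C$ all appear verbatim in the paragraphs following the theorem. The steps you list --- reducing to $\cone^G(X)$ by Theorem~\ref{th:GITeigen}, disposing of the $\alpha_i=\alpha_{i+1}$ walls by replacing one flag factor with a partial flag variety, identifying the $\tau$-weight of $\Li_{\lambda,\mu,\nu}$ on the fixed component with $L_{IJK}$, the Hilbert--Mumford inequality, computing the degree of $\eta_{IJK}$ as the Schubert intersection number $c_{IJ}^{K^\vee}$, and the birationality criterion for a facet --- match the structure of Ressayre's argument. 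The one place where your sketch is noticeably thinner than the cited source is the converse direction: you assert that every codimension-one face meeting $E(n)^{++}$ must arise from a pair $(\tau,C)$ of the given form, but establishing that the relevant destabilizing one-parameter subgroups on a facet can always be normalized into this Schubert shape (a single jump in weight, dominant, compatible with the diagonal action) is the most technical part of \cite{GITEigen} and is not automatic from the abstract theory of well-covering pairs. As a roadmap the proposal is accurate; it just does not engage with the hard part of the classification.
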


It is well known that if $c_{IJ}^{K^\vee}\neq 0$, 
for all $(\alpha,\beta,\gamma)\in\Delta(n)$, we have
$\sum_I\alpha_i+\sum_J\beta_j+\sum_K\gamma_k\leq 0$. In particular, the interstion 
betwenn $\Delta(n)$ and $\sum_I\alpha_i+\sum_J\beta_j+\sum_K\gamma_k=0$ is a face $\Face_{IJK}$
of $\Delta(n)$.

\paragraphe
We now review some notions of \cite{GITEigen,GITEigen2} and use notation of 
Paragraph~\ref{par:GITcone}.
Let $I,J$ and $K$ be  three subsets of $\{1,\cdots,n\}$ of the same cardinality $r$ 
such that $c_{IJ}^{K^\vee}\neq 0$.
We associate to this situation a pair $(C,\lambda)$ where $\lambda$ is a one parameter subgroup of 
$G$, and $C$ is an irreducible component of the set of fix points of $\lambda$ in $X$.
Consider the set $C^+$ of the $x\in X$ such that $\lim_{t\to 0}\lambda(t)x\in C$, and the parabolic
subgroup $P(\lambda)$ of $G$ associated to $\lambda$.
The assumption  $c_{IJ}^{K^\vee}\neq 0$ implies that the morphism
$$
\eta_{IJK}\,:\,G\times_{P(\lambda)}C^+\longto X,[g:x]\longmapsto g.x,
$$
is dominant with finite general fibers.
Now,  $\Face_{IJK}$ correspond to a face $\Face_{IJK}^\QQ$ of  $\cone^G(X)$:
the entire points in $\Face_{IJK}^\QQ$ correspond to the line bundles $\Li$ in $\cone^G(X)$ such that
$\lambda$ act trivialy on $\Li_{|C}$. 
By \cite[]{GITEigen} or \cite[]{GITEigen2},  the entire points in $\Face_{IJK}^\QQ$ correspond to the $G$-linearized 
line bundles $\Li$ on $X$ such that $X^{\rm ss}(\Li)$ intersects $C$.

By construction, $\lambda$ acts with two weights on $V$, the first one has multiplicity $r$ and 
the other one $n-r$. In particular, 
the centralizer $G^\lambda$ in $G$ of $\lambda$ is isomorphic to $\Gl_r\times \Gl_{n-r}$. 
Moreover, $C$ is isomorphic to $\Fl(\CC^r)^3\times\Fl(\CC^{n-r})^3$.
Now, consider the restriction morphism
$$
\rho_{IJK}^\QQ\,:\,\Pic^{G^3}(X)\longto \Pic^{(G^\lambda)^3}(C).
$$

\paragraphe
Let $I,J$ and $K$ be three subsets of $\{1,\cdots,n\}$ of the same cardinality $r$.
Define the linear isomorphism $\rho_{IJK}$ by:
$$
\begin{array}{ccc}
E(n)&\longto&E(r)\oplus E(n-r)\\
(\alpha_i,\beta_i,\gamma_i)&\longmapsto&
((\alpha_i)_{i\in I},(\beta_i)_{i\in J},(\gamma_i)_{i\in K})+
((\alpha_i)_{i\notin I},(\beta_i)_{i\notin J},(\gamma_i)_{i\notin K}).
\end{array}
$$
One easily checks that with evident identifications, $\rho_{IJK}$ is obtained from $\rho_{IJK}^\QQ$ 
by extending the scalar to the real numbers.

\begin{prop}
\label{prop:facerho}
Let $I,J$ and $K$ be as above with $c_{IJ}^{K^\vee}\neq 0$.
  Let $(\alpha,\beta,\gamma)\in E(n)^+$.
Then, $(\alpha,\beta,\gamma)\in \Face_{IJK}$ if and only if 
$\rho_{IJK}(\alpha,\beta,\gamma)\in \Delta(r)\times\Delta(n-r)$.
\end{prop}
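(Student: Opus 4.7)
The plan is to read Proposition~\ref{prop:facerho} through Heckman's theorem (Theorem~\ref{th:GITeigen}) applied at three different scales: once to $G=\Gl_n$ acting on $X$, and once each to $\Gl_r$ and $\Gl_{n-r}$ acting on the two factors of $C\cong\Fl(\CC^r)^3\times\Fl(\CC^{n-r})^3$. This will be combined with the characterization recalled just before the statement: a rational line bundle $\overline{\Li}$ on $X$ lies in $\Face_{IJK}^\QQ$ exactly when $X^{\rm ss}(\overline{\Li})$ meets $C$. Under the restriction map $\rho_{IJK}^\QQ$, the line bundle $\overline{\Li}_{\alpha,\beta,\gamma}$ restricts to the $(G^\lambda)^3$-linearized line bundle on $C$ whose spectral data is exactly $\rho_{IJK}(\alpha,\beta,\gamma)$. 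I will first argue at rational triples and then extend to the full real cone by continuity, since $\Delta(n)$ is the closure of $\cone^G(X)$ and the map $\rho_{IJK}$ is continuous.

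For the direction $(\alpha,\beta,\gamma)\in\Face_{IJK}\Rightarrow\rho_{IJK}(\alpha,\beta,\gamma)\in\Delta(r)\times\Delta(n-r)$, assume $(\alpha,\beta,\gamma)$ rational. By the characterization of $\Face_{IJK}^\QQ$, some $x\in X^{\rm ss}(\overline{\Li})\cap C$ exists, so there is a $G$-invariant section of a positive power of $\overline{\Li}$ non-vanishing at $x$. Its restriction to $C$ is a non-zero $G^\lambda$-invariant section of $\overline{\Li}|_C$, which, via the product decomposition of $C$, is a simultaneous non-zero invariant section under $\Gl_r$ on the first factor and under $\Gl_{n-r}$ on the second. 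Theorem~\ref{th:GITeigen} applied to each factor immediately places $\rho_{IJK}(\alpha,\beta,\gamma)$ in $\Delta(r)\times\Delta(n-r)$.

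For the converse direction, Theorem~\ref{th:GITeigen} on the two factors produces a non-zero $(G^\lambda)^3$-invariant section $s$ of some positive power of $\rho_{IJK}^\QQ(\overline{\Li})$ on $C$. I lift $s$ to $X$ in two steps. First I pull $s$ back along the retraction $C^+\to C$; the hyperplane equation defining $\Face_{IJK}$ forces $\lambda$ to act trivially on $\overline{\Li}|_C$, so the pullback is automatically $P(\lambda)$-invariant on $C^+$, i.e.\ it descends to a section on $G\times_{P(\lambda)}C^+$. Second, I use the dominant morphism with finite general fibers $\eta_{IJK}\colon G\times_{P(\lambda)}C^+\to X$ to transport this section to a non-zero $G$-invariant section of a power of $\overline{\Li}$ on $X$, non-vanishing at the image of $s$ inside $C\subset X$. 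This puts $(\alpha,\beta,\gamma)$ in $\Face_{IJK}^\QQ$ and, after passing to closures, in $\Face_{IJK}$.

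The main obstacle is precisely this last lifting step: one must produce a genuine $G$-invariant section on $X$ from an invariant section on the small fixed-point component $C$. The two ingredients that make it work --- triviality of the $\lambda$-action on $\overline{\Li}|_C$ (equivalent to the defining linear equation of $\Face_{IJK}$ being satisfied) and finiteness of the general fibers of $\eta_{IJK}$ (equivalent to $c_{IJ}^{K^\vee}\neq 0$) --- must be combined carefully, for instance through a norm or trace construction along $\eta_{IJK}$ composed with the parabolic induction from $P(\lambda)$ to $G$, in order to obtain a bona fide section rather than a merely rational one.
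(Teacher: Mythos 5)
Your argument for the direction $(\alpha,\beta,\gamma)\in\Face_{IJK}\Rightarrow\rho_{IJK}(\alpha,\beta,\gamma)\in\Delta(r)\times\Delta(n-r)$ is essentially the paper's: transfer $G$-semistability on $X$ to $G^\lambda$-semistability on $C$, then invoke Theorem~\ref{th:GITeigen} on each factor of $C\cong\Fl(\CC^r)^3\times\Fl(\CC^{n-r})^3$.

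For the converse, however, you have taken a much harder road than the paper and you acknowledge the gap yourself in your final paragraph: you want to lift a $(G^\lambda)^3$-invariant section of $\rho_{IJK}^\QQ(\overline{\Li})$ on $C$ to a $G$-invariant section of a power of $\overline{\Li}$ on $X$ by pulling back along $C^+\to C$, descending to $G\times_{P(\lambda)}C^+$, and pushing along $\eta_{IJK}$. This push-forward step is not straightforward: $\eta_{IJK}$ has finite \emph{general} fibers but is not finite, so a norm or trace only produces a rational section a priori, and one must separately argue it is regular and non-zero at a semistable point. That is real work (it is the kind of argument carried out in the references \cite{GITEigen,GITEigen2}), and you do not carry it out. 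More to the point, it is unnecessary here: the paper proves this implication by a completely elementary construction that uses no GIT at all. Given $\rho_{IJK}(\alpha,\beta,\gamma)\in\Delta(r)\times\Delta(n-r)$, pick Hermitian triples $A'+B'+C'=0$ in $H(r)$ and $A''+B''+C''=0$ in $H(n-r)$ realizing the two spectral data, and form the block-diagonal matrices $A,B,C\in H(n)$. Then $A+B+C=0$, and since $(\alpha,\beta,\gamma)\in E(n)^+$ (so $\alpha$, $\beta$, $\gamma$ are already sorted and $I,J,K$ extract subsequences) the spectra of $A,B,C$ are exactly $\alpha,\beta,\gamma$; moreover $\sum_{I}\alpha_i+\sum_{J}\beta_j+\sum_{K}\gamma_k=\mathrm{tr}(A')+\mathrm{tr}(B')+\mathrm{tr}(C')=0$, so $(\alpha,\beta,\gamma)$ lies on the hyperplane defining $\Face_{IJK}$ and hence in $\Face_{IJK}$. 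You should replace your unfinished lifting argument by this direct matrix construction.
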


\begin{proof}
Assume that   $\rho_{IJK}(\alpha,\beta,\gamma)\in \Delta(r)\times\Delta(n-r)$.
Let $A',B',C'\in H(r)$ and  $A'',B'',C''\in H(n-r)$ such that $A'+B'+C'=0$ and $A''+B''+C''=0$
whose spectrums correspond to $\rho_{IJK}(\alpha,\beta,\gamma)$.
Consider the three following matrices of $H(n)$
$$
A=\left(
  \begin{array}{cc}
    A' & 0\\0&A''
  \end{array}
\right),\ \
 B=\left(
  \begin{array}{cc}
    B' & 0\\0&B''
  \end{array}
\right),\ \
 C=\left(
  \begin{array}{cc}
    C' & 0\\0&C''
  \end{array}
\right).
$$
By construction, $\alpha$ is the spectrum of $A$ and $\sum_I\alpha_i={\rm tr}(A')$, and
similarly for $B$ and $C$. We deduce that $(\alpha,\beta,\gamma)\in \Face_{IJK}$.\\

By Theorem~\ref{th:GITeigen}, we can prove the converse for the cone $\cone^G(X)$.
Let $\Li\in \Face_{IJK}$. Since $X^{\rm ss}(\Li)$ intersects $C$, $C$ contains semistable points
for the action of $G^\lambda$ and $\rho_{IJK}^\QQ(\Li)$. 
It follows that  $\rho_{IJK}^\QQ(\Li)\in \Delta(r)\times\Delta(n-r)$.
\end{proof}

\begin{coro}
  \label{cor:faceint}
Let $I,J$ and $K$ be as in the proposition.
Then, if $\Face_{IJK}$ intersects $E(n)^{++}$, it has codimension one. 
In particular, $c_{IJ}^{K^\vee}=1$.
\end{coro}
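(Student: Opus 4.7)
The plan is to use Proposition~\ref{prop:facerho} to identify $\Face_{IJK}$ locally with the full-dimensional product $\Delta(r)\times\Delta(n-r)$, and then appeal to Theorem~\ref{th:face} for the unit multiplicity. The key dimension count is that $\Delta(r)\times\Delta(n-r)$ has dimension $(3r-1)+(3(n-r)-1)=3n-2$, which is exactly one less than $\dim\Delta(n)=3n-1$; hence a local identification at an interior point automatically yields codimension one.

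Concretely, I would take $p_{0}\in\Face_{IJK}\cap E(n)^{++}$ and set $q_{0}:=\rho_{IJK}(p_{0})\in\Delta(r)\times\Delta(n-r)$. If $q_{0}$ does not already belong to the relative interior of $\Delta(r)\times\Delta(n-r)$, I would perturb: the relative interior is dense in the product, and since $E(n)^{++}$ is open, a small perturbation $q'$ of $q_{0}$ inside that relative interior has its preimage $p':=\rho_{IJK}^{-1}(q')$ still lying in $E(n)^{++}$. Proposition~\ref{prop:facerho} then places $p'\in\Face_{IJK}$. Choosing a small open neighborhood $U\subset E(n)^{++}$ of $p'$, the proposition identifies $\Face_{IJK}\cap U$ with $U\cap\rho_{IJK}^{-1}(\Delta(r)\times\Delta(n-r))$; since $\rho_{IJK}$ is a linear isomorphism and $q'$ sits in the relative interior of the target, this set has dimension exactly $3n-2$, giving $\dim\Face_{IJK}\geq 3n-2$. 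The reverse inequality follows from $\Face_{IJK}\subset E_{0}(n)\cap\{\sum_{I}\alpha_{i}+\sum_{J}\beta_{j}+\sum_{K}\gamma_{k}=0\}$, so $\Face_{IJK}$ is of codimension one in $\Delta(n)$.

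For the assertion $c_{IJ}^{K^{\vee}}=1$, I would invoke the second half of Theorem~\ref{th:face}: since $\Face_{IJK}$ is a codimension-one face meeting $E(n)^{++}$, it must equal $\Face_{I'J'K'}$ for some $I',J',K'$ with $c_{I'J'}^{(K')^{\vee}}=1$. A codimension-one face determines its supporting hyperplane inside $E_{0}(n)$ uniquely, so the two linear forms differ in $E(n)$ only by a multiple of the trace form $\sum\alpha_{i}+\sum\beta_{i}+\sum\gamma_{i}$. Comparing coefficients, which are all in $\{0,1\}$, this multiple is forced to be zero (the alternatives $\pm1$ would require one of the subsets to be empty or full, contradicting $0<r<n$), so $(I,J,K)=(I',J',K')$ and $c_{IJ}^{K^{\vee}}=1$.

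The main obstacle is the perturbation step: a priori $q_{0}=\rho_{IJK}(p_{0})$ could sit on a proper face of $\Delta(r)\times\Delta(n-r)$, in which case a direct local argument would only yield $\dim\Face_{IJK}<3n-2$. Using density of the relative interior together with openness of $E(n)^{++}$ is what lets one slide $p_{0}$ to a genuinely regular point of the identification furnished by Proposition~\ref{prop:facerho}.
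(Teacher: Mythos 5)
Your argument follows the same route as the paper and is essentially sound: Proposition~\ref{prop:facerho} identifies $\Face_{IJK}\cap E(n)^{++}$ with a nonempty relatively open subset of $\Delta(r)\times\Delta(n-r)$, the dimension count $(3r-1)+(3(n-r)-1)=3n-2$ gives codimension one, and Theorem~\ref{th:face} then supplies $c_{IJ}^{K^\vee}=1$. Your perturbation step and the explicit uniqueness argument for the triple $(I,J,K)$ both flesh out points the paper leaves implicit.

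One small gap in the final paragraph: uniqueness of the supporting hyperplane of a facet pins down the restriction of the defining linear form to $E_{0}(n)$ only up to a \emph{positive scalar}, so a priori $\varphi' = a\varphi + b\tau$ with $a>0$ and $\tau$ the trace form, not merely $\varphi'=\varphi+b\tau$. Your $\{0,1\}$-coefficient comparison only treats the case $a=1$. To close this, observe that if some index lies outside both $I$ and $I'$ (and analogously for $J,K$) then comparing that coefficient gives $b=0$, and any index in $I\cap I'$ then gives $a=1$, after which your argument applies; the only escape is $I'=I^{c}$, $J'=J^{c}$, $K'=K^{c}$, which forces $a=-1$, $b=1$, contradicting $a>0$ (both forms are nonpositive on $\Delta(n)$, as noted after Theorem~\ref{th:face}). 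With that patch the proof is complete.
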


\begin{proof}
By Proposition~\ref{prop:facerho},  $\Face_{IJK}\cap E(n)^{++}$ is isomorphic to an open subset
of $\Delta(r)\times\Delta(n-r)$. 
So,  $\Face_{IJK}$ has codimension 2 in $E(n)$ and so codimension one in $\Delta(n)$.
Now, Theorem~\ref{th:face} implies that  $c_{IJ}^{K^\vee}=1$. 
\end{proof}\\

\begin{remark}
Corollary~\ref{cor:faceint} for $\cone^G(X)$ is proved in \cite{GITEigen} by purely 
Geometric Invariant Theoretic methods; that is, without using Theorem~\ref{th:GITeigen}.

The first example of face $\Face_{IJK}$ with $c_{IJ}^{K^\vee}>1$ is obtained for $n=6$.
etc...
\end{remark}

\section{Proof of Fulton's conjecture}

Let $\lambda,\,\mu$ and $\nu$ be three partitions (with $r$ parts) such that $c_{\lambda\,\mu}^\nu=1$.
Let us fix $n$ such that $n-a$ is greater or equal to $\lambda_1,\,\mu_1$ and $\nu_1$.
Set $I=\{n-a+i-\lambda_i\,:\,i=1,\cdots,a\}\subset\{1,\cdots,n\}$. 
Similarly, we associate $J$ and $K$ to $\mu$ and $\nu$. 
It is well known that:
$$
c_{\lambda\,\mu}^\nu=c_{IJ}^K.
$$
By Theorem~\ref{th:face}, $\Face_{IJK^\vee}$ is a face of codimension one in $\Delta(n)$.

Let $(A,B,C,A',B',C')\in H(r)^3\times H(n-r)^3$ corresponding to a point in the relative
interior $\rho_{IJK^\vee}(\Face_{IJK^\vee})$. 
Consider $N$ generic perturbations $(A'_i,B'_i,C_i')$ of $(A',B',C')\in\Delta(n-r)$.
Consider now the Hermitian matrix $A''$ of size $r+N(n-r)$ diagonal by bloc with blocs
$A,\,A_1',\cdots,A_N'$; and similarly $B''$ and $C''$.

Let now, $I'',J''$ and $K''$ be the three subsets of $r+N(n-r)$ of cardinal $r$ corresponding 
to $N\lambda,\,N\mu$ and $N\nu$ respectively. 
It is clear that the image by $\rho_{I''J''K''^\vee}$ of the sprectrum of $(A'',B'',C'')$ belongs to
$\Delta(r)\times \Delta(N(n-r))$. By genericity of the matrices $A_i'$, $B_i'$ and
$C_i'$, this implies that $\Face_{I''J''K''^\vee}$ intersects $E(r+N(n-r))^{++}$.
Now, Corollary~\ref{cor:faceint} allows to conclude.

\bibliographystyle{amsalpha}
\bibliography{biblio}

\begin{center}
  -\hspace{1em}$\diamondsuit$\hspace{1em}-
\end{center}

\vspace{5mm}
\begin{flushleft}
N. R.\\
Universit{\'e} Montpellier II\\
D{\'e}partement de Math{\'e}matiques\\
Case courrier 051-Place Eug{\`e}ne Bataillon\\
34095 Montpellier Cedex 5\\
France\\
e-mail:~{\tt ressayre@math.univ-montp2.fr}  
\end{flushleft}

\end{document}